\documentclass[letterpaper, 10 pt, conference]{ieeeconf}  % Comment this line out
                                                          % if you need a4paper
%\documentclass[a4paper, 10pt, conference]{ieeeconf}      % Use this line for a4
                                                          % paper

\IEEEoverridecommandlockouts                              % This command is only
                                                          % needed if you want to
                                                          % use the \thanks command
\overrideIEEEmargins
% See the \addtolength command later in the file to balance the column lengths
% on the last page of the document

% The following packages can be found on http:\\www.ctan.org
%\usepackage{graphics} % for pdf, bitmapped graphics files
%\usepackage{epsfig} % for postscript graphics files
%\usepackage{mathptmx} % assumes new font selection scheme installed
%\usepackage{times} % assumes new font selection scheme installed
\usepackage{amsmath} % assumes amsmath package installed
\usepackage{amssymb}  % assumes amsmath package installed
\usepackage{amsfonts}
\usepackage{dsfont}
\bibliographystyle{plain}

\makeatletter
%\makeatletter
%\xdef\@endgadget#1{{\unskip\nobreak\hfil\penalty50\hskip1em\hbox{}\nobreak
%    \hfil#1\parfillskip=0pt\finalhyphendemerits=0\par}}
%\def\@qedsymbol{${}_\blacksquare$}
%\def\qed{\@endgadget{\@qedsymbol}}
%\def\@Endofsymbol{$_\square$}
%\def\Endoftheorem{\@endgadget{\@Endofsymbol}}
\newtheorem{lemma}{Lemma}[section]

\newtheorem{proposition}[lemma]{Proposition}
\newtheorem{remark}[lemma]{Remark}

\newcommand{\mR}{\mathbb{R}}

\newcommand{\X}{\mathcal{X}}
\newcommand{\J}{\mathcal{J}}
\newcommand{\Z}{\mathcal{Z}}

\newcommand{\R}{\mathcal{R}}

\newcommand{\U}{\mathcal{U}}

\newcommand{\cL}{\mathcal{L}}
\newcommand{\cH}{\mathcal{H}}

\newcommand{\bq}{\begin{equation}}
\newcommand{\eq}{\end{equation}}
\newcommand{\bma}{\begin{bmatrix}}
\newcommand{\ema}{\end{bmatrix}}
\DeclareMathOperator{\im}{im} 
\DeclareMathOperator{\rank}{rank}

\title{\LARGE \bf A Lagrange subspace approach to dissipation inequalities
}

\author{Arjan van der Schaft, Volker Mehrmann
\thanks{A.J. van der Schaft is with the Bernoulli Institute for Mathematics, Computer Science and AI, and Jan C. Willems Center for Systems and Control, University of Groningen, the Netherlands. E--mail: a.j.van.der.schaft@rug.nl, V. Mehrmann is with the Institut f\"ur Mathematik, TU-Berlin, Germany. E--mail: mehrmann@math.tu-berlin.de.}
}
\begin{document}

\maketitle
\thispagestyle{empty}
\pagestyle{empty}

\begin{abstract} The standard dissipation inequality for passivity is extended from storage functions to general Lagrange subspaces. This is shown to have some interesting consequences. A classical factorization result for passive systems is extended to this generalized case, making use of the newly defined concept of the Hamiltonian lift of a DAE system.
\end{abstract}

\section{Introduction}
Lyapunov and dissipation inequalities are at the heart of systems and control theory. Within the context of Riccati equations it is well-known that the symmetric solutions of them can be obtained by computing invariant Lagrange subspaces of a corresponding Hamiltonian matrix. Apart from the standard Lyapunov inequality $A^\top Q + QA \leq 0$ in the symmetric matrix $Q$, also the dual Lyapunov inequality $AX + XA^\top \leq 0$ arises at many places in the theory; notably in controllability and (stochastic) filtering. This already suggests the consideration of Lagrange subspaces as a means for their unification.
In this paper we take a close look at Lyapunov and dissipation inequalities by starting from a {\it general Lagrange subspace} point of view, and exploring its consequences. At least for dissipation inequalities this seems to be relatively new. After treating some generalities with regard to Lagrange subspaces and their invariance (Section II, see also the Appendix), we show in Section III how the standard dissipation inequality for passivity can be naturally extended from gradient vectors of {\it storage functions} to general {\it Lagrange subspaces}. Importantly, we note that this implies an {\it ordinary} dissipation inequality (with storage function determined by the Lagrange subspace) for an associated differential-algebraic equation (DAE) system. Then, in Section IV, we analyze the consequences of the satisfaction of this generalized dissipation inequality for the structure of the system. In particular we show that whenever the Lagrange subspace does {\it not} correspond to a storage function defined on the whole state space this necessarily implies loss of controllability. The rest of the paper is devoted to the {\it factorization} of systems satisfying the generalized dissipation inequality. First in Section V it is shown, via direct transfer matrix computations, how the satisfaction of the generalized dissipation inequality leads to the same factorization result as in the 'classical' dissipation case, but now with respect to the (generalized) transfer matrix of the DAE system alluded to above. In Section VI this result is extended to the state space setting, thereby also covering non-minimality. This extension is based on the newly defined concept of the Hamiltonian lift of a DAE system. Finally Section VII contains conclusions and outlook for further work.

\section{Lagrange subspaces and generalized Lyapunov inequalities}
Consider an $n$-dimensional linear state space $\X$, with dual space $\X^*$. A subspace $\cL \subset \X \times \X^*$ is called a {\it Lagrange subspace} if the canonical symplectic form $\J$ on $\X \times \X^*$, in matrix representation given as
\bq
\label{symplectic}
\J = \bma 0 & - I_n \\ I_n & 0 \ema,
\eq
is zero restricted to $\cL$, and furthermore $\cL$ is {\it maximal} with respect to this property. Equivalently, $\cL = \cL^{\perp}$, where ${}^{\perp}$ denotes the orthogonal companion with respect to $\J$. It follows that $\dim \cL=n$ for any Lagrange subspace.

It is easily seen (see Proposition \ref{A} in the Appendix) that any Lagrange subspace $\cL$ can be represented as
\bq
\label{image}
\cL=\im \bma P \\ S \ema
\eq
for certain square matrices $P,S$ satisfying
\bq
\label{sym}
S^\top P = P^\top S, \quad \rank \bma P \\ S \ema = n.
\eq
Furthermore, any subspace $\cL$ as in \eqref{image} with $P$ and $S$ satisfying \eqref{sym} is a Lagrange subspace. Denoting elements of $\X$ by $x$, and of its dual space $\X^*$ by $p$, it follows that all elements $(x,p) \in \cL$ can be expressed as
\bq
\label{image1}
x=Pz, \; p=Sz, \quad z \in \Z,
\eq
where $\Z$ is an $n$-dimensional parametrization space. The 'canonical' choice of $\Z$ is $\cL$, considered as a linear space in its own right. In this case, $P$ is given as the projection of $\cL \subset \X \times \X^*$ to $\X$, and $S$ is the projection of $\cL \subset \X \times \X^*$ to $\X^*$.

We note that any Lagrange subspace is endowed with an intrinsic bilinear form, which is given as
\bq
\langle (x_1,p_1), (x_2,p_2) \rangle = p_1^\top x_2=p_2^\top x_1, \ (x_i,p_i) \in \cL, i=1,2.
\eq
For $\cL$ given by \eqref{image}, \eqref{sym}, \eqref{image1} this bilinear form is simply given as
\bq
\langle (x_1,p_1), (x_2,p_2) \rangle = z_1^\top S^\top Pz_2
\eq
Obvious examples of a Lagrange subspace are \eqref{image} with $P=I,S=Q$, where $Q=Q^\top$ is a symmetric matrix, or $P=X,S=I$, where $X=X^\top$ is a symmetric matrix. However, there are many more cases 'in between' these two subclasses; see also Proposition \ref{B}.

Given any Hamiltonian function $H: \X \times \X^* \to \mR$ one defines the standard symplectic Hamiltonian dynamics on $\X \times \X^*$ as
\bq
\label{sympl}
\bma 0 & -I \\[2mm] I & 0 \ema \bma \dot{x} \\[2mm] \dot{p} \ema = \bma \frac{\partial H}{\partial x} \\[2mm] \frac{\partial H}{\partial p} \ema
\eq
A Lagrange subspace $\cL$ is {\it invariant} for \eqref{sympl} if and only if (see e.g. \cite{abraham})
\bq
H(x,p) = c, \mbox{ for all } (x,p) \in \cL
\eq
for some constant $c$. In the case of a quadratic Hamiltonian (as in all of this paper) the constant $c$ is actually zero.

With respect to the Hamiltonian $H(x,p)=p^\top Ax$, with $A$ some real $n \times n$ matrix, the Hamiltonian dynamics \eqref{sympl} is given as
\bq
\bma \dot{x} \\[2mm] \dot{p} \ema = \bma Ax \\[2mm] - A^\top p \ema,
\eq
and $\cL$ is invariant for this Hamiltonian dynamics if and only if $p^\top Ax=0$ for all $(x,p) \in\cL$. For any Lagrange subspace as given by \eqref{image}, \eqref{sym}, \eqref{image1} this holds if and only if
\bq
z^\top S^\top APz =0
\eq
for all $z$, or equivalently
\bq
\label{lyapgen}
S^\top A P + P^\top A^\top S =0, \, S^\top P = P^\top S, \, \rank \bma P \\ S \ema = n.
\eq
This overarches the two traditional {\it Lyapunov equations}
\bq
\label{lyap}
A^\top Q + QA=0, \, Q=Q^\top, \quad AX + XA^\top=0, \, X=X^\top,
\eq
corresponding to $P=I,S=Q$, respectively $P=X,S=I$.

Equation \eqref{lyapgen} therefore will be called the {\it generalized Lyapunov equation}; see \cite{MehT88} for related developments. Of course, in the case that $Q$ and $X$ are {\it invertible}, the two Lyapunov equations \eqref{lyap} are equivalent in the sense that by pre- and post-multiplication with the inverse of $Q$ or $X$ they can be transformed into each other. However, they are {\it not} equivalent if this is not the case.

Furthermore, the {\it generalized Lyapunov inequality} corresponds to $H(x,p) \leq 0$ for all $(x,p) \in \cL$. This means $z^\top S^\top APz \leq 0$ for all $z$, or equivalently
\bq
\label{lyapgen1}
S^\top A P + P^\top A^\top S \leq 0.
\eq
Classical examples are
\bq
\label{obs}
A^\top Q + QA= -C^\top C
\eq
with $Q$ the observability Grammian of the system $\dot{x}=Ax, y=Cx$, and
\bq
\label{cont}
A X + X A^\top = - BB^\top
\eq
with $X$ the controllability Grammian of the system $\dot{x}=Ax + Bu$. (The same equation appears in the covariance equation for linear stochastic systems.) The intrinsic difference between the two Lyapunov inequalities $\eqref{obs}$ and $\eqref{cont}$ is especially clear in case the system is {\it unobservable}, corresponding to a singular $Q$, or {\it uncontrollable}, corresponding to a singular $X$.

\section{The generalized dissipation inequality}
The ideas from the last section can be used for generalizing the {\it dissipation inequality} corresponding to passivity. In the case of a standard input-state-output system
\bq
\label{sys}
\begin{array}{rcl}
\dot{x} & = & Ax + Bu \\[2mm]
y & = & Cx + Du
\end{array}
\eq
this takes the following form. Consider the Hamiltonian
\bq
\label{ham}
H(x,p,u):= p^\top (Ax + Bu) - u^\top (Cx+Du).
\eq
Then the traditional dissipation inequality corresponding to passivity \cite{willems1972} is obtained by substituting $p=Qx, Q=Q^\top$, in $H(x,p,u) \leq 0$, so as to obtain
\bq
x^\top Q(Ax + Bu) - u^\top (Cx + Du) \leq 0, \ \mbox{for all } x,u.
\eq
This leads to the well-known linear matrix inequality, see e.g. \cite{willems1972},
\bq
\label{kyp}
\bma A^\top Q + QA & Q^\top B - C^\top \\[2mm] B^\top Q - C & - D -D^\top \ema \leq 0.
\eq
This inequality and its ramifications extends the famous Kalman-Yakubovich-Popov lemma; see e.g. \cite{willems1972} and the references quoted therein.

By replacing the storage function $\frac{1}{2}x^\top Q x$ (leading to the gradient vector $p=Qx$) by a Lagrangian subspace $\cL=\im \bma P \\ S \ema$ this extends to the {\it generalized dissipation inequality}
\bq
\label{kypgen}
\bma S^\top AP + P^\top A^\top S & S^\top B - P^\top C^\top \\[2mm] B^\top S - CP & - D -D^\top \ema \leq 0,
\eq
obtained by substituting $x=Pz$, $p=Sz$ in $H(x,p,u) \leq 0$.
If $P$ is singular, then \eqref{kypgen} can{\it not} be rewritten as a traditional dissipation inequality \eqref{kyp} corresponding to a storage function expressed in the state $x$. On the other hand, as we have seen above, there {\it is} a bilinear form, and thus a quadratic function, associated to the Lagrange subspace given by $P$, $S$, namely $V(z):= \frac{1}{2}z^\top S^\top Pz$. Then, using $P\dot{z} =\dot{x} = APz + Bu$, the generalized dissipation inequality \eqref{kypgen} is seen to be equivalent to
\bq
\frac{d}{dt} V(z) = z^\top S^\top P \dot{z} \leq u^\top (CPz + Du).
\eq
Thus, although \eqref{kypgen} can{\it not} be interpreted as a dissipation inequality for the original system \eqref{sys} in case $P$ is singular, it still {\it can} be interpreted as a dissipation inequality for the {\it DAE system} in the state variables $z$
\bq
\label{DAE}
\begin{array}{rcl}
P\dot{z} & = & APz + Bu, \quad z \in \Z, \\[2mm]
y & = & CPz + Du,
\end{array}
\eq
with singular pencil $sP-AP$, and  storage function $V(z)=\frac{1}{2}z^\top S^\top Pz$. Note that the equation space for the DAE system \eqref{DAE}, i.e., the co-domain of the mappings $P$ and $AP$, is given by $\X$.
\begin{remark}
Throughout this paper we do not impose any nonnegativity assumptions on $Q$ in \eqref{kyp}, or on $P$, $S$ in \eqref{kypgen}. Thus, strictly speaking we are dealing with {\it cyclo}-passivity instead of passivity; cf. \cite{passivitybook}. In particular, we do not assume nonnegativity of $S^\top P$. Geometrically the nonnegativity condition $S^\top P \geq 0$ can be formalized by requiring that the {\it symmetric} canonical form on $\X \times \X^*$, in matrix representation given as
\bq
\bma 0 & I_n \\ I_n & 0 \ema,
\eq
is nonnegative on $\cL =\im \bma P \\ S \ema$.
\end{remark}
\begin{remark}
The consideration of the generalized dissipation inequality \eqref{kypgen} suggests as a special case the following "dual" dissipation inequality (resulting from taking $S=I,P=P^\top =:X$ in \eqref{kypgen})
\bq
\label{kypgen2}
\bma AX + X A^\top  & B - X C^\top \\[2mm] B^\top - CX & - D -D^\top \ema \leq 0.
\eq
\end{remark}

\medskip

\section{Coordinate expressions}
Consider a general Lagrange subspace $\cL= \im \bma P \\ S \ema$. By allowing for coordinate transformations on $\X$ and $\Z$ we can always transform $P$ to the form
\bq
P = \bma I_k & 0 \\ 0 & 0 \ema,
\eq
where $k \leq n$ is the rank of $P$. Using $S^\top P=P^\top S$ it follows that $S$ needs to be of the corresponding form
\bq
S = \bma S_{11} & 0 \\ 0 & S_{22} \ema, \quad S_{11}=S^\top_{11}, S_{22}=S^\top_{22}.
\eq
Furthermore, since $\rank  \bma P \\ S \ema =n$, $S_{22}$ is invertible. In such coordinates, with partitioning $A,B,C$ accordingly, the generalized Lyapunov inequality \eqref{lyapgen1} can be seen to take the form
\bq
\bma A_{11}^\top S_{11} + S_{11}A_{11} & A^\top_{21}S_{22} & S_{11}B_1 - C_1^\top \\[2mm]
S_{22}A_{21} & 0 & S_{22}B_2, \\[2mm]
B_1^\top S_{11} - C_1 & B_2^\top S_{22} & -D -D^\top \ema \leq 0.
\eq
Because of the $0$-block this implies that $S_{22}A_{21}=0$, $S_{22}B_2=0$, and thus by invertibility of $S_{22}$, $A_{21}=0$ and $B_2=0$. Hence the system \eqref{sys} takes the block-triangular form
\bq
\label{sys1}
\begin{array}{rcl}
\dot{x}_1 & = & A_{11}x_1 + A_{12}x_2 + B_1u, \\[2mm]
\dot{x}_2 & = & A_{22}x_2, \\[2mm]
y & = & C_1x_1 + C_2x_2 + Du,
\end{array}
\eq
satisfying the {\it reduced} dissipation inequality
\bq
\bma A_{11}^\top S_{11} + S_{11}A_{11} & S_{11}B_1 - C_1^\top \\[2mm]
B_1^\top S_{11} - C_1 &  -D -D^\top \ema \leq 0.
\eq
In particular, it follows that for any initial condition $x(0)$ with $x_2(0)=0$, the system \eqref{sys1} leads to the reduced system
\bq
\label{sys2}
\begin{array}{rcl}
\dot{x}_1 & = & A_{11}x_1 + B_1u, \\[2mm]
y & = & C_1x_1 + Du,
\end{array}
\eq
that is satisfying the dissipation inequality with storage function $\frac{1}{2}x_1^\top S_{11}x_1$. Furthermore, it follows that any system \eqref{sys} satisfying the generalized dissipation inequality \eqref{kypgen} with $P$ singular is necessarily {\it uncontrollable} (since $x_2$ is independent of $u$ and $x_1$). In fact, it follows from \eqref{sys1} that the {\it controllable part} of the system \eqref{ABCD} is contained in $\im P$.
\begin{remark}
This is in line with the satisfaction of the dissipation inequality for the DAE system \eqref{DAE}, with storage function $\frac{1}{2}z^\top S^\top Pz$. In fact, using $A_{21}=0$, $B_2=0$ it follows that the DAE system \eqref{DAE} reduces to the ODE system
\bq
\begin{array}{rcl}
\dot{z}_1 & = & A_{11}z_1 + B_1u, \quad z=\bma z_1 \\ z_2 \ema \in \Z, \\[2mm]
y & = & C_1z_1 + Du,
\end{array}
\eq
solely in $z_1=x_1$; i.e., the part of \eqref{sys1} corresponding to any initial condition $x(0)$ with $x_2(0)= 0$.
\end{remark}
\begin{remark} When applied to the system $\dot{x}=Ax$ {\it without} inputs and outputs, \eqref{sys1} specializes to the triangular form
\bq
\label{sys12}
\begin{array}{rcl}
\dot{x}_1 & = & A_{11}x_1 + A_{12}x_2 \\[2mm]
\dot{x}_2 & = & A_{22}x_2 
\end{array}
\eq
satisfying the reduced Lyapunov inequality
\bq
A_{11}^\top S_{11} + S_{11}A_{11}  \leq 0
\eq
\end{remark}

\medskip 

Without using general coordinate transformations on $\X$ and $\Z$ one can still use the following general representation of Lagrange subspaces.
\begin{proposition}
\label{B}
Consider a Lagrange subspace $\cL \subset \X \times X^*$ given as $\cL = \im \bma P \\ S \ema$ for $n \times n$ matrices $P,S$ satisfying \eqref{sym}.
Suppose $\rank P=m \leq n=\dim \X$.
Then, possibly after permutation of the elements of $x$ and correspondingly $p$, there exists a splitting $x=\bma x_1 \\ x_2 \ema, p=\bma p_1 \\ p_2 \ema$ with $x_1,p_1$ both $m$-dimensional, and $x_2,p_2$ both $(n-m)$-dimensional, such that $\cL$ is represented as
\bq
\cL=\{(x,p) \in \X \times \X^* \mid \begin{bmatrix} p_1 \\ -x_2 \end{bmatrix} = W \begin{bmatrix} x_1 \\ p_2 \end{bmatrix} \}
\eq
with $W=W^\top$.
%\bq
%\label{signature}
%W^\top \begin{bmatrix} I_m & 0 \\ 0 & - I_{n-m} \end{bmatrix}= \begin{bmatrix} I_m & 0 \\ 0 & - I_{n-m} \end{bmatrix} W
%\eq
\end{proposition}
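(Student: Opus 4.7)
The plan is to realize $\cL$ as a graph over an appropriately chosen half of the coordinates $(x_1,p_2)$, where $x_1$ collects those components of $x$ corresponding to a maximal set of linearly independent rows of $P$, and $p_2$ collects the remaining components of $p$. Concretely, since $\rank P = m$, a permutation of the rows of $P$ (with the corresponding permutation on the rows of $S$, i.e.\ on the indices of $x$ and $p$) brings $P$ to the block form $P=\bma P_1 \\ P_2 \ema$ with $P_1\in\mR^{m\times n}$ of rank $m$; partition $S=\bma S_1\\ S_2\ema$ conformably. I will then show that the $n\times n$ matrix $\bma P_1\\ S_2\ema$ is invertible, so that $z\in\Z$ is uniquely determined by $(x_1,p_2)=(P_1z,S_2z)$; the other two blocks $p_1=S_1z$ and $x_2=P_2z$ are then linear functions of $(x_1,p_2)$, giving the sought affine representation.

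The first key step is the invertibility of $\bma P_1\\ S_2\ema$. Suppose $z$ satisfies $P_1z=0$ and $S_2z=0$. Since $\rank P=m$, the rows of $P_2$ are linear combinations of those of $P_1$, so there exists $M$ with $P_2=MP_1$ and therefore $P_2z=0$, i.e.\ $Pz=0$. Multiplying the Lagrange identity $S^\top P=P^\top S$ on the right by $z$ yields $P^\top Sz=0$, hence $P_1^\top S_1z+P_2^\top S_2z=0$; using $S_2z=0$ and the injectivity of $P_1^\top$ (which has full column rank $m$) one gets $S_1z=0$, and thus $Sz=0$ as well. The maximality condition $\rank\bma P\\ S\ema=n$ then forces $z=0$. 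This step is where the geometric content of Lagrangianness is really used, and I expect it to be the main obstacle, since rank-arguments alone (without the symmetry $S^\top P=P^\top S$) do not suffice.

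Once $\bma P_1\\ S_2\ema$ is invertible, write
\bq
\bma p_1 \\ -x_2 \ema = W \bma x_1 \\ p_2 \ema,\qquad W=\bma W_{11} & W_{12}\\ W_{21} & W_{22}\ema,
\eq
which is the unique linear map expressing $(p_1,-x_2)$ in terms of $(x_1,p_2)$ through the identification of $\cL$ with $\mR^m\times\mR^{n-m}$. The sign convention $-x_2$ is chosen precisely to align with the symplectic form so that symmetry of $W$ will encode Lagrangianness.

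Finally, to verify $W=W^\top$, I will use the defining property of a Lagrange subspace: $p^\top x'=p'^\top x$ for all $(x,p),(x',p')\in\cL$. Substituting $p_1=W_{11}x_1+W_{12}p_2$, $x_2=-W_{21}x_1-W_{22}p_2$ (and analogously for the primed quantities) into this identity produces a bilinear form in the independent parameters $(x_1,p_2,x_1',p_2')$ which must vanish. Collecting the $(x_1,x_1')$, $(p_2,p_2')$, and mixed terms separately yields $W_{11}=W_{11}^\top$, $W_{22}=W_{22}^\top$, and $W_{12}=W_{21}^\top$, which together give $W=W^\top$ and complete the proof.
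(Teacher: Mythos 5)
Your proof is correct. Note that the paper itself does not prove Proposition \ref{B}: it only remarks that the statement is classical in symplectic geometry \cite{arnold, weinstein} and refers to \cite{phDAE} for a direct linear-algebraic proof, so there is no in-paper argument to compare against; your argument is a self-contained proof of exactly the kind the paper alludes to. The decisive step --- invertibility of $\bma P_1 \\ S_2 \ema$ --- is handled correctly: from $P_1z=0$ and $S_2z=0$ you get $Pz=0$ via $P_2=MP_1$, then $P^\top Sz=S^\top Pz=0$ collapses to $P_1^\top S_1z=0$, and injectivity of $P_1^\top$ together with $\rank\bma P\\ S\ema=n$ forces $z=0$; as you correctly observe, this is precisely where the symmetry $S^\top P=P^\top S$ is indispensable. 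Two points worth making explicit in a final write-up: (i) the permutation acts simultaneously on the components of $x$ and on the dual components of $p$, which preserves both the condition $S^\top P=P^\top S$ and the symplectic form, so the permuted subspace is still Lagrangian in the permuted coordinates; (ii) after establishing invertibility you should state why $\cL$ \emph{equals} the graph of $W$ rather than merely being contained in it --- either because every $(x_1,p_2)$ arises from a unique $z$ (as your parametrization argument shows), or simply because both are $n$-dimensional subspaces with one contained in the other. The concluding verification that $p^\top x'=p'^\top x$ on $\cL$ yields $W_{11}=W_{11}^\top$, $W_{22}=W_{22}^\top$, $W_{12}=W_{21}^\top$ is routine and correct, and the sign convention on $-x_2$ is indeed what makes the cross terms cancel into a single symmetry condition.
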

This proposition is well-known in symplectic geometry \cite{arnold, weinstein}, and in fact directly extends to Lagrange submanifolds. The quadratic function (in the subvectors $x_1,p_2$) defined by the symmetric matrix $W$ is called a {\it generating function} of the Lagrange subspace. A direct linear-algebraic proof of Proposition \ref{B} can be found in \cite{phDAE}.

As a consequence, by defining the following parametrization vector $z$, and partitioning $W$ accordingly,
\bq
z= \bma x_1 \\[2mm] p_2 \ema, \quad W= \bma W_{11} & W_{12} \\[2mm] W_{21} & W_{22} \ema,
\eq
we obtain
\bq
P= \bma I & 0 \\[2mm] -W_{21} & -W_{22} \ema, \quad S= \bma W_{11} & W_{12} \\[2mm] 0 & I \ema.
\eq
In particular, we see that
\bq
S^\top P = \bma W_{11} & 0 \\[2mm] 0 & -W_{22} \ema.
\eq

\section{Transfer matrix factorization}
Consider the DAE system \eqref{DAE}, with $\cL = \im \bma P \\ S \ema$ satisfying the generalized dissipation inequality \eqref{kypgen}. It follows that there exist matrices $M,N$ such that
\bq
\label{kypgen1}
\bma S^\top AP + P^\top A^\top S \! & \! S^\top B - P^\top C^\top \\[2mm] B^\top S - CP \! & \! - D -D^\top \ema = -
\bma M^\top \\[2mm] N^\top \ema \! \bma M \! & \! N \ema
\eq
(See \cite{reis2011lur} for related developments.)

We have the following result, directly generalizing a corresponding result for the classical dissipation inequality \eqref{kyp}; cf. \cite{anderson, willems1972}. Since $Ps-AP$ is singular, we cannot define the transfer matrix of the DAE system \eqref{DAE} in the standard way using the inverse of $Ps-AP$. However in the present case the transfer matrix of \eqref{DAE} still {\it can} be uniquely defined in a generalized form as
\bq
\label{Gs}
G(s):= CP(Ps - AP)^-B +D,
\eq
where for a matrix function $T$, $T^-$ denotes an appropriate generalized inverse. This is justified because $1)$ (as we have seen in the previous section) the generalized dissipation inequality implies that $\im B$ is contained in $\im P$ (see \eqref{sys1}, $2)$ the generalized inverse of $Ps-AP$ is premultiplied by $P$. Thus the expression $P(Ps - AP)^-B$ is indeed well defined. We obtain the following result.
\begin{proposition}\label{facto}
Consider the generalized transfer matrix of the DAE system \eqref{DAE} given by \eqref{Gs}.
%\bq
%\label{Gs}
%G(s):= CP(Ps - AP)^{-1}B +D.
%\eq
Then \eqref{kypgen1} corresponds to the following factorization of transfer matrices
\bq
G(s) + G^\top (-s) = K^\top (-s) K(s)
\eq
where
\bq
\label{Ks}
K(s):= M(Ps - AP)^{-}B +N
\eq
(Note that $\ker P \subset \ker M$ so again $K(s)$ is well defined.)
\end{proposition}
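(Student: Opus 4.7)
The plan is a direct algebraic computation of $G(s) + G^\top(-s) - K^\top(-s) K(s)$, using the factored form \eqref{kypgen1} together with the Lagrange symmetry $S^\top P = P^\top S$ and a single identity for the generalized inverse. Setting $\Phi(s) := (Ps - AP)^-$, I would first read off from \eqref{kypgen1} the three relations $CP = B^\top S + N^\top M$, $P^\top C^\top = S^\top B + M^\top N$, and $D + D^\top = N^\top N$. Substituting these into the expansion, the cross terms involving $N$ and the additive $N^\top N$ piece cancel identically, leaving
\begin{equation*}
B^\top S\, \Phi(s) B + B^\top \Phi(-s)^\top S^\top B - B^\top \Phi(-s)^\top M^\top M\, \Phi(s) B .
\end{equation*}
Abbreviating $X(s) := S\Phi(s) B$ and $Y(s) := P\Phi(s) B$, and using the remaining top-left identity $-M^\top M = S^\top AP + P^\top A^\top S$, this becomes
\begin{equation*}
B^\top X(s) + X(-s)^\top B + X(-s)^\top A\, Y(s) + Y(-s)^\top A^\top X(s) .
\end{equation*}

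The pivotal step is the generalized-inverse identity $(Ps - AP)\,\Phi(s) B = B$, which follows from the defining property $(Ps-AP)\Phi(s)(Ps-AP) = Ps-AP$ applied to any vector in $\im(Ps-AP)$. One has $\im B \subset \im(Ps-AP)$ because $\im B \subset \im P$ (noted just before \eqref{Gs}) and $\im P$ is $A$-invariant by the block structure of Section IV, so $(sI-A)\im P = \im P$ generically. This identity gives $AY(s) = sY(s) - B$ and, by replacing $s$ with $-s$ and transposing, $Y(-s)^\top A^\top = -sY(-s)^\top - B^\top$. Plugging these in, the four $B$-terms cancel pairwise, leaving
\begin{equation*}
s\bigl[X(-s)^\top Y(s) - Y(-s)^\top X(s)\bigr] = s\, B^\top \Phi(-s)^\top \bigl[S^\top P - P^\top S\bigr]\Phi(s) B ,
\end{equation*}
which vanishes by the Lagrange symmetry.

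The well-definedness assertion $\ker P \subset \ker M$ can be verified by passing to the coordinates of Section IV: the $(2,2)$ block of the LHS of \eqref{kypgen1} is zero there, which forces the corresponding block of $-M^\top M$ on the RHS to vanish, hence $M$ is zero on $\ker P$. This also ensures that $M\Phi(s) B$ is independent of the choice of generalized inverse, because any two choices differ on $B$ by an element of $\ker(Ps-AP) = \ker P$ (generically). The only delicate point in the whole argument is establishing $(Ps-AP)\Phi(s) B = B$ without invertibility of the pencil; once this is in hand, the remainder of the proof is a mechanical bookkeeping exercise driven entirely by the three identities in \eqref{kypgen1} and the Lagrange symmetry.
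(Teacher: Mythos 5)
Your proposal is correct and follows essentially the same route as the paper's proof: substitute the relations encoded in \eqref{kypgen1} into $G(s)+G^\top(-s)$, use the generalized-inverse identity $(Ps-AP)(Ps-AP)^{-}B=B$ (which the paper uses implicitly when combining the two terms over a common factor), and let the Lagrange symmetry $S^\top P=P^\top S$ kill the terms in $s$ while the block $S^\top AP+P^\top A^\top S=-M^\top M$ produces the factor $M^\top M$. The only differences are organizational: you treat general $D$ directly (the paper sets $D=0$ and defers the rest to references) and you make explicit both the justification of $\im B\subset\im(Ps-AP)$ via the $A$-invariance of $\im P$ from Section IV and the verification that $\ker P\subset\ker M$, which the paper only asserts.
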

\noindent
\begin{proof}
For simplicity assume $D=0$, in which case $N=0$ in \eqref{kypgen1}, and thus \eqref{kypgen1} amounts to
\bq
\label{STAP}
S^\top AP + P^\top A^\top S = - M^\top M, \quad CP=B^\top S
\eq
Then, using similar arguments as in \cite{anderson}, and additionally using $S^\top P = P^\top S$,
\bq
\begin{array}{l}
G(s) + G^\top (-s) =  \\[2mm]
CP(Ps-AP)^{-}B + B^\top (-P^\top s - P^\top A^\top)^{-}P^\top C^\top =\\[2mm]
 B^\top S(Ps-AP)^{-}B + B^\top (-P^\top s - P^\top A^\top)^{-}S^\top B = \\[2mm]
\left[B^\top S + B^\top (-P^\top s - P^\top A^\top)^{-}S^\top (Ps -AP) \right] \cdot  \\[2mm]
(Ps-AP)^{-}B =\\[2mm]
 B^\top (-P^\top s - P^\top A^\top)^{-} \cdot \\[2mm]
 \left[ (- P^\top s - P^\top A^\top )S - S^\top (Ps -AP) \right] \cdot \\[2mm]
 (Ps-AP)^{-}B =\\[2mm]
 B^\top (-P^\top s - P^\top A^\top)^{-} \cdot \\[2mm]
 \left[ -P^\top A^\top S - S^\top AP \right] \cdot (Ps-AP)^{-}B =\\[2mm]
 B^\top (-P^\top s - P^\top A^\top)^{-} M^\top M  (Ps-AP)^{-}B \\[2mm]
 = K^\top (-s) K(s).
\end{array}
\eq
The proof for the case $D\neq 0$ can be performed in a similar way (see \cite{anderson} for details in a similar context), or by first removing the feedthrough term by an extension as discussed in \cite{MehU22_ppt}.
\end{proof}

\section{The Hamiltonian lift of DAE systems and factorization}
In this section we will show how the factorization, performed in the previous section using direct transfer matrix computations, can be also obtained from a pure state space point of view (and so including the uncontrollable part of the system).
First recall, see \cite{crouch-vds}, that any ordinary input-state-output system
\bq
\label{ABCD}
\begin{array}{rcl}
\dot{x} & = &Ax + Bu, \\[2mm]
y & = & Cx +Du,
\end{array}
\eq
on $\X$ can be {\it lifted} to a Hamiltonian input-output system on $\X \times \X^*$. (See \cite{brockett, 1981, crouch-vds, passivitybook} for the definition of a Hamiltonian input-output system.) Indeed, one considers the canonical symplectic form $\J$ on $\X \times \X^*$ as in \eqref{symplectic}, together with the Hamiltonian as considered before
\bq
\label{ham1}
H(x,p,u)= p^\top (Ax + Bu) - u^\top (Cx + Du).
\eq
This leads to the {\it Hamiltonian input-output system}
\bq
\label{hamio}
\begin{array}{rcl}
\J \bma \dot{x} \\[2mm] \dot{p} \ema & = & \bma \frac{\partial H}{\partial x} \\[2mm] \frac{\partial H}{\partial p} \ema =
\bma   A^\top p - C^\top u \\[2mm]  Ax+Bu \ema, \\[7mm]
y_{\mathrm{ham}} & = & - \frac{\partial H}{\partial u} = -B^\top p + Cx + (D + D^\top)u,
\end{array}
\eq
also called the {\it Hamiltonian lift} of the original system \eqref{ABCD}.
(The definition immediately extends to the nonlinear case as well; see \cite{crouch-vds} for details.) It is immediately verified that the transfer matrix of \eqref{hamio} is given as
\bq
G'(s) + G'^\top (-s),
\eq
where $G'(s)= C(Is - A)^{-1}B + D$ is the transfer matrix of \eqref{ABCD}. This lifting is instrumental for various purposes; see e.g. \cite{passivitybook} for a detailed discussion.

The described lifting construction can be extended to DAE systems as follows. Consider first a {\it general} linear DAE system
\bq
\label{DAE1}
\begin{array}{rcl}
E \dot{x} & = & Ax + Bu, \\[2mm]
y & = & Cx + Du,
\end{array}
\eq
where $E$ and $A$ are mappings $E: \X \to \R$, $A: \X \to \R$, for some linear equation space $\R$, together with $B: \U \to \R$, with $\U=\mR^m$ the input space. Then define the skew-symmetric form $\J_E$ on $\X \times \R^*$ as
\bq
\J_E := \bma 0 & - E^\top \\[2mm] E & 0 \ema.
\eq
Since in general $E$ is not invertible, the skew-symmetric form $\J_E$ is degenerate, and defines a {\it pre-symplectic form} \cite{abraham, arnold}. Similarly to \eqref{ham}, define the Hamiltonian $H': \X \times \R^* \times \U \to \mR$ as
\bq
H'(x,v,u) := v^\top (Ax + Bu) - u^\top (Cx + Du).
\eq
Then define the {\it Hamiltonian lift} of \eqref{DAE1} as the input-output Hamiltonian DAE system
\bq
\begin{array}{rcl}
\J_E \bma \dot{x} \\[2mm] \dot{v} \ema & = & \bma \frac{\partial H'}{\partial x} \\[2mm] \frac{\partial H'}{\partial v} \ema =
\bma   A^\top v - C^\top u \\[2mm]  Ax+Bu \ema, \\[7mm]
y_{h} & = & - \frac{\partial H'}{\partial u} = -B^\top v + Cx + (D + D^\top)u.
\end{array}
\eq
Applied to the specific DAE system \eqref{DAE} with the corresponding Hamiltonian
\bq
\label{cH}
\cH(z,v,u) := v^\top (APz + Bu) - u^\top (CPz + Du)
\eq
this yields the following Hamiltonian lift of \eqref{DAE}
\bq
\label{hamlift1}
\begin{array}{rcl}
\bma 0 & -P^\top \\[2mm] P & 0 \ema \bma \dot{z} \\[2mm] \dot{v} \ema & = &
\bma   P^\top A^\top v - P^\top C^\top u \\[2mm]  APz+Bu \ema, \\[7mm]
\hat{y}_{h} & = &  -B^\top v + CPz + (D + D^\top)u,
\end{array}
\eq
living on $\Z \times \X^*$. See \cite{Meh91a} for a related Hamiltonian in the context of optimal control problems with DAE constraints.

The associated generalized transfer matrix of the Hamiltonian lift \eqref{hamlift1} is given as
\bq
\begin{array}{l}
\bma CP \! & \! -B^\top \ema \bma 0  \! & \! -sP^\top - P^\top A^\top \\[2mm] sP - AP \! & \! 0 \ema ^{-} \bma -P^\top C^\top \\[2mm] B \ema \\[7mm]
+ \; (D + D^\top) =  CP(sP-AP)^{-}B + D \\[2mm]
+ \; B^\top (-sP^\top - P^\top A^\top)^{-} P^\top C^\top +D^\top
\end{array}
\eq
Thus, similar to the Hamiltonian lift of the standard input-state-output system \eqref{ABCD}, the generalized transfer matrix of the Hamiltonian lift of \eqref{DAE} is equal to $G(s) + G^\top (-s)$, with $G(s)=CP(Ps - AP)^{-}B +D$ the generalized transfer matrix of \eqref{DAE}.
\begin{remark}
This holds as well for the Hamiltonian lift of a {\it general} DAE system \eqref{DAE1}.
\end{remark}
We will now show how the factorization result $G(s) + G^\top (-s) = K^\top (-s) K(s)$ as obtained in the previous section by generalized transfer matrix computations can be also obtained by state space methods applied to the Hamiltonian lift \eqref{hamlift1}. This provides extra insight, and, importantly, handles the intrinsic {\it non-minimality} (due to uncontrollability) arising from the generalized dissipation inequality in case $P$ is singular. Furthermore, the state space point of view is expected to facilitate the generalization to the nonlinear case; cf. \cite{passivitybook} for the standard nonlinear input-state-output case.

Thus let $\cL=\im \bma P \\ S \ema$ be a solution to \eqref{kypgen}. Then consider the {\it state space transformation}
\bq
\label{transfo}
z=\bar{z}, v=\bar{v} +Sz
\eq
for \eqref{hamlift1}. This transforms the Hamiltonian $\cH(z,v,u)$ defined in \eqref{cH} into
\bq
\begin{array}{l}
\bar{\cH}(\bar{z},\bar{v},u) := \bar{v}^\top (AP\bar{z} + Bu) - u^\top (CP\bar{z} + Du), \\[2mm]
 \quad +\, \bar{z}^\top S^\top AP \bar{z} + \bar{z}^\top S^\top Bu - u^\top CP \bar{z} - u^\top Du.
\end{array}
\eq
Making use of \eqref{kypgen1} this yields
\bq
%\begin{array}{rcl}
\bar{\cH}(\bar{z},\bar{v},u) = \bar{v}^\top (AP\bar{z} + Bu) - \frac{1}{2}(M\bar{z} + Nu)^\top (M\bar{z} + Nu).
%\end{array}
\eq
Furthermore, the state space transformation \eqref{transfo} leaves the pre-symplectic form $\bma 0 & -P^\top \\P & 0 \ema$ invariant, since
\bq
\begin{array}{l}
\bma I & S^\top \\[2mm] 0 & I \ema \bma 0 & -P^\top \\[2mm] P & 0 \ema \bma I & 0 \\[2mm] S & I \ema = \\[6mm]
\bma -P^\top S + S^\top P & -P^\top \\[2mm] P & 0 \ema = \bma 0 & -P^\top \\[2mm] P & 0 \ema,
\end{array}
\eq
in view of $S^\top P = P^\top S$.
Hence the state space transformation \eqref{transfo} transforms the Hamiltonian lift \eqref{hamlift1} into another input-output Hamiltonian representation of the {\it same} system, still defined with respect to the {\it same} presymplectic form, but now with Hamiltonian $\bar{\cH}$. This new representation (in the new coordinates $\bar{z},\bar{v}$) is given as
\bq
\label{hamlift2}
\begin{array}{rcl}
\bma 0 & -P^\top \\[2mm] P & 0 \ema \bma \dot{\bar{z}} \\[2mm] \dot{\bar{v}} \ema  & = &
\bma   P^\top A^\top \bar{v} - M^\top (M \bar{z} + Nu) \\[2mm]
AP \bar{z}+Bu \ema, \\[6mm]
\widehat{y}_{h}  & =  &- B^\top \bar{v}  + N^\top (M\bar{z} + Nu).
\end{array}
\eq
It can be directly verified that the generalized transfer matrix of \eqref{hamlift2} is given by $K^\top (-s) K(s)$, with $K(s)$ defined by \eqref{Ks}.

From a state space point of view \eqref{hamlift2} is most easily interpreted in case $D=0$ (see \cite{MehU22_ppt} for the removal of $D$ by an extension). In fact, $D=0$ results in \eqref{STAP}, and thus the Hamiltonian lift \eqref{hamlift2} simplifies to
\bq
\label{hamlift3}
\begin{array}{rcl}
\bma 0 & -P^\top \\[2mm] P & 0 \ema \bma \dot{\bar{z}} \\[2mm] \dot{\bar{v}} \ema  & = & \bma   P^\top A^\top \bar{v} - M^\top M \bar{z}  \\[2mm]
AP \bar{z}+Bu \ema, \\[6mm]
\widehat{y}_{h}  & = &  -B^\top \bar{v}.
\end{array}
\eq
This is seen to be the {\it series interconnection} of the system (with generalized transfer matrix $K(s)$)
\bq
\label{primal}
P \dot{\bar{z}} =  AP \bar{z} + B\bar{u}, \quad \bar{y} = M\bar{z},
\eq
with inputs $\bar{u}$ and outputs $\bar{y}$, and its {\it adjoint system} (with generalized transfer matrix $K^\top(-s)$)
\bq
\label{dual}
P^\top \dot{\bar{v}} = - P^\top A^\top \bar{z} + M^\top u_a, \quad y_a = -B^\top \bar{v},
\eq
with inputs $u_a$ and outputs $y_a$. Indeed, by substituting $u_a=\bar{y}$ in \eqref{primal}, \eqref{dual} one recovers \eqref{hamlift3} with $u=\bar{u}$ and $\widehat{y}_h=y_a$. See e.g. \cite{KuM} for the definition of the adjoint system of a DAE system.

\begin{remark}
Note that the original input-output Hamiltonian representation \eqref{hamlift1} (in the old coordinates $z,v$) is equal to the {\it parallel interconnection} (i.e., $\widehat{y}_h = y + y_a$, $u_a=u$) of the original DAE system \eqref{DAE} with its adjoint system
\bq
\label{dual2}
\begin{array}{rcl}
P^\top \dot{v} & = & - P^\top A^\top v + C^\top u_a \\[2mm]
y_a & = & - B^\top v + D^\top u_a.
\end{array}
\eq
Thus by the generalized dissipation inequality this {\it parallel} interconnection is written as a {\it series} interconnection.
\end{remark}
\begin{remark}
Note that \eqref{dual} together with \eqref{primal} implies
\bq
\frac{d}{dt} \bar{v}^\top P \bar{z} = \dot{\bar{v}}^\top P \bar{z} + \bar{v}^\top P \dot{\bar{z}} = u_a^\top \bar{y} - \bar{u}^\top y_a,
\eq
which can be regarded as the defining property of the adjoint system; cf. \cite{crouch-vds}.
Similarly for \eqref{dual2} with respect to \eqref{DAE}.
\end{remark}

\section{Conclusions and outlook}
It has been shown how Lyapunov and dissipation inequalities can be generalized from quadratic Lyapunov and storage functions to Lagrange subspaces. This gives rise to the consideration of an associated DAE system on a parametrization space. This DAE system satisfies the classical dissipation inequality with respect to a storage function that is determined by the Lagrange subspace. Next it has been shown how the same factorization result can be obtained as for the classical dissipation inequality. The state space treatment of this result, making use of the Hamiltonian lift of a DAE system, does not rely on minimality, and suggests generalization to the nonlinear case. Another venue for future research is the extension of the Lagrange subspace methodology from standard ODE systems (as in this paper) to DAE systems. As a final result, it has been noted how the generalized dissipation inequality implies lack of controllability in case the Lagrange subspace does not correspond to a storage function defined on the whole state space. This is another indication that the traditional emphasis in systems and control theory on {\it minimal} systems may not be so relevant any more; also in the light of electrical circuit and neural network theory.

\section{Appendix}

\begin{proposition}
\label{A}
A subspace $\cL \subset \X \times \X^*$ with $\dim \X=n$ is a Lagrange subspace if and only if there exist $n \times n$ matrices $P,S$ satisfying
\bq
\label{sp}
S^\top P = P^\top S , \; \rank \begin{bmatrix} S^\top & P^\top \end{bmatrix} =n
\eq
such that (see \eqref{image})
\bq
\label{image2}
\cL=\{(x,p) \in \X \times \X^* \mid \exists z \in \Z=\mR^n \mbox{ s.t. }\begin{bmatrix} x \\ p \end{bmatrix} = \begin{bmatrix} P \\ S \end{bmatrix}z \}.
\eq
\end{proposition}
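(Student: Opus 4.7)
The plan is to prove the two implications separately, treating each as a translation between a geometric/symplectic statement and an algebraic condition on the matrices $P,S$.

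For the sufficiency direction, assume $P,S$ satisfy \eqref{sp} and let $\cL$ be defined by \eqref{image2}. First I would observe that the rank condition $\rank \begin{bmatrix} S^\top & P^\top \end{bmatrix} = n$ is equivalent to $\rank \begin{bmatrix} P \\ S \end{bmatrix} = n$, so the parametrization $z \mapsto (Pz,Sz)$ is injective and hence $\dim \cL = n$. Next I would compute $\J$ on two arbitrary elements $(x_i,p_i)=(Pz_i,Sz_i) \in \cL$ for $i=1,2$: a direct calculation gives
\bq
\bma x_1^\top & p_1^\top \ema \J \bma x_2 \\ p_2 \ema = p_1^\top x_2 - x_1^\top p_2 = z_1^\top (S^\top P - P^\top S) z_2 = 0,
\eq
using the symmetry condition. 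Thus $\cL$ is isotropic, and since $\dim \cL = n = \tfrac{1}{2}\dim(\X \times \X^*)$, it is automatically maximal isotropic, i.e.\ Lagrange.

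For the necessity direction, let $\cL$ be a Lagrange subspace. Since $\dim \cL = n$, pick any basis of $\cL$ and assemble its elements as the columns of a $2n \times n$ matrix of full column rank; partitioning this matrix into its upper $n$ rows $P$ and lower $n$ rows $S$ yields the image representation \eqref{image2} with $\rank \begin{bmatrix} P \\ S \end{bmatrix} = n$, equivalently $\rank \begin{bmatrix} S^\top & P^\top \end{bmatrix} = n$. Isotropy of $\cL$ with respect to $\J$ means that for all $z_1,z_2 \in \mR^n$,
\bq
z_1^\top (S^\top P - P^\top S) z_2 = 0,
\eq
and since this holds for all $z_1,z_2$, we conclude $S^\top P = P^\top S$.

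There is no real obstacle here — the proposition is essentially a dictionary entry between symplectic linear algebra and matrix identities. The only place requiring mild care is the translation between the two equivalent rank conditions (on the stacked matrix versus its transpose concatenation) and the sign bookkeeping in the computation of the symplectic form, which produces the identity $S^\top P = P^\top S$ cleanly. The maximality clause in the definition of Lagrange subspace is handled for free by the standard dimension argument, avoiding any explicit verification that $\cL = \cL^\perp$.
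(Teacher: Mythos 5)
Your proof is correct and follows essentially the same route as the paper's: verify isotropy of the image parametrization via the computation $p_1^\top x_2 - x_1^\top p_2 = z_1^\top(S^\top P - P^\top S)z_2$ for sufficiency, and for necessity write the $n$-dimensional subspace in image form and read off $S^\top P = P^\top S$ from the vanishing of the symplectic form. Your explicit remark that isotropy plus $\dim\cL = n$ yields maximality is a small but welcome addition that the paper leaves implicit.
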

\begin{proof}
The 'if' direction follows by checking that $\bma x_1^\top & p_1^\top \ema \bma 0 & -I \\ I & 0 \ema \bma x_2 \\ p_2 \ema =0$ for any two pairs $(x_i,p_i)$ with $x_i=Pz_i, p_i=Sz_i, i=1,2$, and $P$, $S$ satisfying \eqref{sp}.\\
For the 'only if' direction we note that any $n$-dimensional subspace $\cL$ can be written as in \eqref{image2} for certain $n \times n$ matrices $ P$, $S$ satisfying $\rank \begin{bmatrix} P^\top & S^\top \end{bmatrix} =n$. Then take any two pairs $(x_i,p_i) \in \cL$ with $x_i=Pz_i$, $p_i=Sz_i$, $i=1,2$. Since $\cL$ is a Lagrange subspace it follows that
\bq
\begin{array}{l}
0 =  \bma x_1^\top & p_1^\top \ema \bma 0 & -I \\ I & 0 \ema \bma x_2 \\ p_2 \ema = z_2^\top S^\top Pz_1 - z_1^\top S^\top Pz_2 \\[3mm]
\quad = -z_1^\top (S^\top P - P^\top S)z_2
 \end{array}
\eq
for all $z_1,z_2$, implying that $S^\top P = P^\top S$.
\end{proof}

\end{document}